\newtheorem{theorem}{Theorem}
\newtheorem{remark}{Remark}
\newtheorem{lemma}{Lemma}
\DeclareMathOperator*{\argmin}{arg\,min} 
\newcommand{\diag}{\mathop{\rm diag}\nolimits}
\newcommand{\ba}[1]{\begin{array}{#1}}
\newcommand{\ea}{\end{array}}
\begin{document}
\title{\LARGE \bf
$\pi$MPC: A Parallel-in-horizon and Construction-free NMPC Solver
}
\author{Liang Wu$^{1,*}$, Bo Yang$^{2,*}$, Junheng Li$^{3,*}$, Xu Yang$^{2}$, Yilin Mo$^{2}$, Yang Shi$^{4}$, Aaron D. Ames$^{3}$, and J\'an Drgo\v na$^{1}$%
\thanks{$^*$Equal Contributions. $^{1}$Johns Hopkins University, MD 21218, USA. $^{2}$Tsinghua University, Beijing 100084, China. $^{3}$California Institute of Technology, CA 91106, USA, $^{4}$University of Victoria, Victoria, BC V8N 3P6, Canada. Corresponding author: {\tt\small wliang14@jh.edu}.\\
This research is in part supported by the Ralph O’Connor Sustainable Energy Institute at Johns Hopkins University and the Technology Innovation Institute.
}
}

\maketitle

\begin{abstract}
The alternating direction method of multipliers (ADMM) has gained increasing popularity in embedded model predictive control (MPC) due to its code simplicity and pain-free parameter selection. However, existing ADMM solvers either target general quadratic programming (QP) problems or exploit sparse MPC formulations via Riccati recursions, which are inherently sequential and therefore difficult to parallelize for long prediction horizons. This technical note proposes a novel \textit{parallel-in-horizon} and \textit{construction-free} nonlinear MPC algorithm, termed $\pi$MPC, which combines a new variable-splitting scheme with a velocity-based system representation in the ADMM framework, enabling horizon-wise parallel execution while operating directly on system matrices without explicit MPC-to-QP construction. Numerical experiments and accompanying code are provided to validate the effectiveness of the proposed method.
\end{abstract}

\begin{IEEEkeywords}
Model predictive control, quadratic program, ADMM, parallel-in-horizon.
\end{IEEEkeywords}

\section{Introduction}
Model predictive control (MPC) is a widely used model-based optimal control framework in manufacturing, energy systems, and robotics, where control actions are obtained by solving an online optimization problem, typically formulated as a quadratic program (QP), based on a prediction model, constraints, and an objective function. Over the past decades, the MPC community has developed a wide range of fast QP algorithms, spanning interior-point methods (IPMs) \cite{Wang2010FastMP, wu2025direct,wu2025eiqp,wu2025quadratic}, active-set methods (ASMs) \cite{ferreau2014qpoases,ferreau2008online,bemporad2015quadratic}, and first-order methods (FOMs) such as gradient projection \cite{patrinos2013accelerated}, the alternating direction method of multipliers (ADMM) \cite{boyd2011distributed, SBGBB20}), and and other techniques \cite{hermans2019qpalm, wu2023simple, wu2023construction}, for deployment on embedded controller platforms.

Code simplicity is a critical requirement for industrial embedded MPC algorithms, as it enables straightforward verification, validation, and maintenance on embedded controller platforms. As a result, FOMs have gained increasing attention, since they rely solely on matrix–vector operations and projections, without requiring linear system solves, in contrast to IPMs and ASMs. Among FOMs for MPC problems, ADMM stands out because it offers pain-free parameter selection. Unlike many gradient-based methods that require estimating the Lipschitz constant to choose the step size, ADMM converges for any $\rho>0$. This advantage is particularly significant in linear time-varying or real-time iteration (RTI) nonlinear MPC (NMPC) settings \cite{gros2020linear}, where the Hessian matrix of the underlying MPC-QP is time-varying; in such cases, gradient-based methods require careful step-size calculation, thereby increasing the complexity of NMPC implementations, whereas ADMM does not.

\subsection{Related work}
ADMM is well known for its problem-decomposition properties, which naturally support constraint handling as well as parallel and distributed computation. For example, \cite{SBGBB20} proposes an ADMM-based solver for general constrained QP problems, in which a variable-splitting scheme facilitates constraint handling. Moreover, \cite{tang2019distributed, wang2017distributed, rey2020admm} employ ADMM to design distributed MPC schemes for interconnected systems or systems with local (uncoupled) and global (coupled) constraints. In contrast, this technical note focuses on developing ADMM algorithms tailored specifically for MPC applications, without assuming that the controlled system possesses special structural properties.

ADMM algorithms typically perform more efficiently on sparse MPC-QP formulations than on condensed MPC-QP formulations \cite{jerez2011condensed}, since the latter eliminate the system states and involve only control inputs, which often leads to poorer conditioning. When ADMM is applied to sparse MPC-QP formulations, exploiting the structure of linear state-space dynamics, e.g., via Riccati recursions, \cite{annergren2012admm,sokoler2014input}, can not only reduce the computational complexity to scale linearly with the prediction horizon but also enable a \textit{construction-free} implementation. In particular, no explicit MPC-to-QP construction is required, as the algorithm operates directly on the system matrices $\{A_{t,k}, B_{t,k}\}_{k=0}^{N-1}$ rather than assembling a large sparse matrix over the prediction horizon $N$. The \textit{construction-free} feature reduces code complexity in NMPC implementations (e.g., the online linearized or the RTI scheme) and eliminates online MPC-to-QP overhead. However, Riccati recursion is inherently sequential and thus hard to parallelize, which can lead to slow computation for long-horizon MPC problems. Consequently, there is a strong need for \textit{construction-free} ADMM iterations that also achieve \textbf{linear- and parallel-in-horizon} cost.

\subsection{Contribution}
This technical note proposes a novel \textit{\underline{\textbf{p}}arallel-\underline{\textbf{i}}n-horizon} and \textit{construction-free} NMPC algorithm, referred to as $\pi$MPC. The ADMM-based $\pi$MPC algorithm achieves \textit{construction-free} and \textit{parallel-in-horizon} execution through a new variable-splitting scheme (introducing copies of $\{x_{k+1}\}$ and $\{B_{t,k}u_k\}$) together with a velocity-based formulation, while retaining closed-form iteration updates. In particular, a convergence-guaranteed accelerated ADMM framework incorporating a restart scheme is adopted to enhance computational efficiency.

Moreover, owing to its construction-free design, $\pi$MPC features low code complexity in NMPC implementations and a reduced deployment barrier on embedded platforms, while its parallel computational efficiency makes it particularly attractive for long-horizon NMPC problems.

\section{Problem Formulation}
Consider an MPC problem for a linear-time-varying system, which can be viewed as a generalized formulation from online-linearized- or RTI-NMPC (see \cite{gros2020linear}), as follows,
\begin{subequations}\label{eqn_MPC}
    \begin{align}
                \min_{U,X}&~ \sum_{k=0}^{N-1}\frac{1}{2}\|Cx_{k+1}-r_y\|_{W_y}^2 + \frac{1}{2}\|u_k- r_u\|_{W_u}^2\label{eqn_MPC_a}\\
        \text{s.t.}&~ x_{k+1} = A_{t,k}x_{k} + B_{t,k}u_k + e_{t,k},~\forall k\in\mathbb{N}_{0}^{N-1}\label{eqn_MPC_b}\\
        &~ u_{k}\in\mathcal{U},~x_{k+1}\in\mathcal{X}, ~\forall k\in\mathbb{N}_{0}^{N-1}\label{eqn_MPC_c}\\
        &~ x_0 = x(t),\label{eqn_MPC_d}
    \end{align}
\end{subequations}
where $N$ denotes the length of the prediction horizon; $U=\mathrm{col}(u_0,\cdots,u_{N-1})$ and $X=\mathrm{col}(x_1,\cdots,x_N)$ denote the control input and state sequence along the prediction horizon, respectively; the dynamic-related matrices and affine terms $\{A_{t,k},B_{t,k},e_{t,k}\}_{k=0}^{N-1}$ can be time-varying across the prediction horizon; $C$ denotes the output matrix, $r_y$ and $r_u$ denote the tracking references; $W_y$ and $W_u$ denote the weights symmetric matrices; $\mathcal{X}$ and $\mathcal{U}$ denote closed convex constraint sets.
\begin{remark}\label{remark_1}
In this article, we assume that Problem \eqref{eqn_MPC} is convex and feasible. If $W_y,~W_u$ are positive semi-definite and $\mathcal{X},~\mathcal{U}$ are convex, then Problem \eqref{eqn_MPC} is convex; Under the current feedback state $x(t)$ (Eqn. \eqref{eqn_MPC_d}), if there exist trajectories $(U,X)$ that satisfy Eqn. \eqref{eqn_MPC_b} and \eqref{eqn_MPC_c}, then Problem \eqref{eqn_MPC} is feasible.
\end{remark}

\section{Preliminaries: Previous ADMM for MPC}
\subsection{General ADMM algorithm}
Generally, Problem \eqref{eqn_MPC} can be reformulated as a sparse general QP formulation as follows
\begin{equation}\label{eqn_QP}
        \min_y~ \frac{1}{2}y^\top H_t y + h_t^\top y,~\text{s.t}~ E_t y\in\mathcal{Y}_t
\end{equation}
where $y$ can be $\mathrm{col}(u_0,x_1,\cdots,u_{N-1},x_{N})$, $H_t,~h_t$ denote the the assembly of Eqn. \eqref{eqn_MPC_a} (time-varying if the weight matrices and reference signals are time-varying), $E_t,~\mathcal{Y}_t$ denote the assembly of Eqns. \eqref{eqn_MPC_b} and \eqref{eqn_MPC_c} (note that Eqn. \eqref{eqn_MPC_b} constitutes an equality constraint, which can be equivalently reformulated as a pair of inequality constraints). It should be noted that since $x(t),\{A_{t,k},B_{t,k},e_{t,k}\}_{k=0}^{N-1}$ are potentially time-varying, the resulting $E(t)$ and $\mathcal{Y}(t)$ exhibit time dependence; this notation is adopted to reflect such temporal variation explicitly.

A simple splitting scheme introduces $w=E_ty$, making the QP \eqref{eqn_QP} equivalent to the following formulation:
\begin{equation}
        \min_{y,w}~ \frac{1}{2}y^\top H_t y + h_t^\top y + \Pi_{\mathcal{Y}_t}(w),~\text{s.t}~ E_ty-w=0 
\end{equation}
where $\Pi_{\mathcal{Y}_t}(w)$ denotes the indicator function of the constraint set $\mathcal{Y}_t$, and its augmented Lagrangian (with the scaled-form Lagrangian variable) is given by $L_{\rho}(y,w,\lambda) = \frac{1}{2}y^\top H_t y + h_t^\top y + \Pi_{\mathcal{Y}_t}(w) + \frac{\rho}{2}\|E_ty-w+\lambda\|_2^2$. Based on \cite{boyd2011distributed}, the corresponding $(i+1)$-th ADMM update is
\begin{subequations}\label{eqn_ADMM_QP_iter}
{\small
\begin{empheq}[left=\empheqlbrace]{align}
    y^{i+1}&=\argmin_{y}\frac{1}{2}y^\top H_t y + y^\top h_t+\frac{\rho}{2}\|E_ty-w^i+\lambda^i\|_2^2\label{eqn_ADMM_QP_iter_a}\\
    &=(H_t+\rho E_t^\top E_t)^{-1}\Big(\rho E_t^\top (w^i-\lambda^i)-h_t\Big)\nonumber\\
    w^{i+1} &=  \argmin_{w}\Pi_{\mathcal{Y}_t}(w)  +\frac{\rho}{2}\|E_ty^{i+1}-w+\lambda^i\|_2^2\\
    &=\mathrm{Proj}_{\mathcal{Y}_t}\left(E_ty^{i+1}+\lambda^i \right) \nonumber\\
    \lambda^{i+1}&=\lambda^{i} +  E_ty^{i+1}-w^{i+1}
    \end{empheq}
}
\end{subequations}
\begin{remark}
If $H_t, E_t$ from the MPC-to-QP construction are time-invariant and the value of $\rho$ is fixed during iterations, $(H(t) +\rho E(t)^\top E(t))^{-1}$ can be computed offline and stored for use during the iterations. Otherwise, such as in NMPC problems, the computation cost of the matrix inverse in Eqn. \eqref{eqn_ADMM_QP_iter_a} is \textbf{cubic} in the prediction horizon and not parallel-in-horizon.
\end{remark}

\subsection{Structure-exploited ADMM via Riccati Recursion}
To reduce the possible cubic-in-horizon cost, some structure-exploited ADMM algorithms tailored for MPC adopt the use of the well-known Riccati Recursion, see \cite{annergren2012admm,sokoler2014input}. By decoupling the inequality constraints \eqref{eqn_MPC_c} and the dynamic equality constraints \eqref{eqn_MPC_b} by introducing auxiliary copies $(V, Z)$ of $(U, X)$, as follows,
\begin{equation}\label{eqn_MPC_ADMM_1}
        \min_{U,X,V,Z}~ f(U,X) + g(V,Z),~\text{s.t. } U = V,~ X = Z
\end{equation}
where $V\triangleq\mathrm{col}(v_0,\cdots,v_{N-1})$ and $Z\triangleq\mathrm{col}(z_1,\cdots,z_N)$; the two terms of objective function are given by
\[
\begin{aligned}
    f(U,X) \triangleq \sum_{k=0}^{N-1}&\frac{1}{2}\|Cx_{k+1}-r_y\|_{W_y}^2 + \frac{1}{2}\|u_k- r_u\|_{W_u}^2\\
    +~& \text{Dyn}_{x_0=x(t)}(u_k,x_k,x_{k+1})    \\
    g(V,Z)\triangleq \sum_{k=0}^{N-1}& \Pi_{\mathcal{U}}(v_k) + \Pi_{\mathcal{X}}(z_{k+1})
\end{aligned}
\]
with $\text{Dyn}_{x_0=x(t)}(u_k,x_k,x_{k+1})$ is the indictor function of $x_{k+1}-A_{t,k}x_{k} - B_{t,k}u_k - e_{t,k} = 0$; $\Pi_{\mathcal{U}}$ and $\Pi_{\mathcal{X}}$ are the indictor function of constraint sets $\mathcal{U}$ and $\mathcal{X}$, respectively. Then, the augmented Lagrangian (with the scaled-form Lagrangian variable) for Problem \eqref{eqn_MPC_ADMM_1} is $ L_{\rho}(U,X,V,Z,\Phi,\Psi)\triangleq f(U,X) + g(V,Z) + \frac{\rho}{2}\|U-V+\Phi\|_2^2 + \frac{\rho}{2}\|X-Z+ \Psi\|_2^2$, where $\Phi\triangleq\mathrm{col}(\phi_0,\cdots,\phi_{N-1})$ and  $\Psi\triangleq\mathrm{col}(\psi_1,\cdots,\psi_{N})$ denotes the dual variable for equality constraints $u_k=v_k$ and $x_{k+1}=z_{k+1}$ for $k=0,\cdots,N-1$, respectively. Based on \cite{boyd2011distributed}, the corresponding $(i+1)$-th ADMM update is given by
\begin{subequations}
{\small
\begin{empheq}[left=\empheqlbrace]{align}
    &\left(\begin{array}{c}
         U^{i+1}  \\
         X^{i+1} 
    \end{array}\right)=\argmin_{U,X} f(U,X)+\frac{\rho}{2}\left\|U-V^{i}+\Phi^i\right\|_2^2 \label{eqn_ADMM_1_update}\\
         &\qquad\qquad\quad\quad\quad\quad + \frac{\rho}{2}\left\|X-Z^i+ \Psi^i\right\|_2^2\nonumber\\
    &\left(\begin{array}{c}
         V^{i+1}  \\
         Z^{i+1}
    \end{array}\right)=\argmin_{V,Z} g(V,Z)+\frac{\rho}{2}\left\|U^{i+1}-V+ \Phi^i\right\|_2^2\label{eqn_ADMM_2_update}\\
    &\qquad\qquad\quad\quad\quad\quad + \frac{\rho}{2}\left\|X^{i+1}-Z+\Psi^i\right\|_2^2 \nonumber\\
    &\left(\begin{array}{c}
         \Phi^{i+1}  \\
         \Psi^{i+1} 
    \end{array}\right) = \left(\begin{array}{c}
         \Phi^i + U^{i+1} - V^{i+1}  \\
         \Psi^i + X^{i+1} - Z^{i+1}
    \end{array}\right)\label{eqn_ADMM_3_update}
    \end{empheq}
}
\end{subequations}
Particularly, the update Eqn. \eqref{eqn_ADMM_1_update} is the solution of the following unconstrained MPC problem,
\[
    \begin{aligned}
        \min_{U,X}&~\sum_{k=0}^{N-1} \frac{1}{2}\|Cx_{k+1}-r_y\|_{W_y}^2 + \frac{1}{2}\|u_k- r_u\|_{W_u}^2\\
        &~\quad +\frac{\rho}{2}\left\|u_k-v_k^i+\phi_k^i\right\|_2^2 +\frac{\rho}{2}\left\|x_{k+1}-z_{k+1}^i+\psi_{k+1}^i\right\|_2^2\\
        \text{s.t.}&~ x_{k+1} = A_{t,k}x_{k} + B_{t,k}u_k + e_{t,k}, ~\forall k\in\mathbb{N}_{0}^{N-1}\\
         &~ x_0 = x(t)
    \end{aligned}
\]
which can be solved by the Riccati Recursion with \textbf{linear} complexity on the prediction horizon $N$ even when $\{A_{t,k},B_{t,k},e_{t,k}\}_{k=0}^{N-1}$ are time-varying and the value of $\rho$ is changing during ADMM iterations.

\begin{remark}
 However, the implementation of Riccati Recursion is inherently sequential and hard to parallelize. Although the update Eqn. \eqref{eqn_ADMM_2_update}, which involves simple projection operations, and the update Eqn. \eqref{eqn_ADMM_3_update} can be parallelized; its computational cost accounts for only a small fraction of the total cost. Consequently, the main computational bottleneck lies in the sequential operation Eqn. \eqref{eqn_ADMM_1_update}.   
\end{remark}

\section{Parallel-in-horizon ADMM Algorithm}
To achieve both \textbf{linear-} and \textbf{parallel-in-horizon} computation cost per ADMM iteration, this paper introduces the stacked variables $V=\mathrm{col}(v_0,\cdots,v_{N-1})$ and $Z=\mathrm{col}(z_1,\cdots,z_N)$ as follows,
\begin{equation}\label{eqn_MPC_ADMM_2}
\begin{aligned}
     \min_{U,X,V,Z}&~ \sum_{k=0}^{N-1}\frac{1}{2}\|Cx_{k+1}-r_y\|_{W_y}^2 + \frac{1}{2}\|u_k- r_u\|_{W_u}^2\\
     \text{s.t.}&~ x_{k+1} - z_{k+1}=0,~\forall k\in\mathbb{N}_{0}^{N-1}\\
    &~ B_{t,k}u_k - v_k =0,~\forall k\in\mathbb{N}_{0}^{N-1}\\
     &~ z_{k+1} - A_{t,k}x_k- v_k - e_{t,k}=0,~\forall k\in\mathbb{N}_{0}^{N-1}\\
     &~ u_k\in\mathcal{U},~z_{k+1}\in\mathcal{X}~\forall k\in\mathbb{N}_{0}^{N-1}
\end{aligned}
\end{equation}
\textbf{Note that the above splitting scheme is the key to achieving linear parallel-in-horizon and construction-free features, such as making a copy of $v_k=B_{t,k}u_k$ instead of $v_k=u_k$, and incorporating $z_{k+1}$, rather than $x_{k+1}$, into the dynamical equation.} By representing the inequality constraints $u_k\in\mathcal{U}, z_{k+1}\in\mathcal{X}$ for $k\in\mathbb{N}_{0}^{N-1}$ as the indicator functions and incorporating them into the objective, namely
\[
\begin{aligned}
    f(U,X)&\triangleq \sum_{k=0}^{N-1} \frac{1}{2}\|Cx_{k+1}\!-\!r_y\|_{W_y}^2 \!+\!\frac{1}{2} \|u_k\!-\!r_u\|_{W_u}^2 \!+\! \Pi_{\mathcal{U}}(u_k)\\
    g(V,Z)&\triangleq \sum_{k=0}^{N-1} \Pi_{\mathcal{X}}(z_{k+1}), 
\end{aligned}
\]
the augmented Lagrangian (with the scaled-form Lagrangian variable) is given by
\[
\begin{aligned}
    &L_{\rho}(U,X,V,Z,\Theta,\mathcal{B},\Lambda)=f(U,X) + g(V,Z)\\
    &+ \frac{\rho}{2}\sum_{k=0}^{N-1} \|x_{k+1}-z_{k+1}+\theta_k\|_2^2+ \frac{\rho}{2}\sum_{k=0}^{N-1} \|B_{t,k}u_k-v_k+\beta_k\|_2^2\\
    &+ \frac{\rho}{2}\sum_{k=0}^{N-1} \|z_{k+1}-A_{t,k}x_k-v_k-e_{t,k}+\lambda_k\|_2^2
\end{aligned}
\]
where $\Theta\triangleq\mathrm{col}(\theta_0,\cdots,\theta_{N-1})$, $\mathcal{B}\triangleq\mathrm{col}(\beta_0,\cdots,\beta_{N-1})$, and $\Lambda\triangleq\mathrm{col}(\lambda_0,\cdots,\lambda_{N-1})$. Then, the corresponding $(i+1)$-th ADMM update is given by
\begin{subequations}\label{eqn_ADMM2_iter}
{\small
\begin{empheq}[left=\empheqlbrace]{align}
&\textbf{for } k=0,\cdots, N-1~\textbf{(in parallel)} \label{eqn_ADMM2_u_x_update}\\
    &\quad u_{k}^{i+1}=\argmin_{u_k}~ \frac{1}{2}\|u_k- r_u\|_{W_u}^2+\Pi_{\mathcal{U}}(u_k)\nonumber\\
     &\quad\quad + ~\frac{\rho}{2}\left\|B_{t,k}u_k-v_k^i+\beta_k^i\right\|_2^2  \nonumber \\
     &\quad x_{k+1}^{i+1}=\argmin_{x_{k+1}}~ \frac{1}{2}\|Cx_{k+1}-r_y\|_{W_y}^2\nonumber\\
      &\quad\quad+\frac{\rho}{2}\|x_{k+1}-z_{k+1}^i+\theta_k^i\|_2^2 \nonumber\\
     &\quad\quad+ \frac{\rho}{2}\|z_{k+2}^i-A_{t,k}x_{k+1}-v_{k+1}^i-e_{t,k}+\lambda_{k+1}^i\|_2^2\nonumber\\
    &\textbf{for } k=0,\cdots, N-1~\textbf{(in parallel)}\label{eqn_ADMM2_v_z_update}\\
    &\quad(v_{k}^{i+1},z_{k+1}^{i+1})=\argmin_{v_k,z_{k+1}}~ \Pi_{\mathcal{X}}(z_{k+1})\nonumber\\
    &\quad + \frac{\rho}{2}\Big(\|x_{k+1}^{i+1}-z_{k+1}+\theta_k^i\|_2^2 + \|B_{t,k}u_{k}^{i+1}-v_k+\beta_k^i\|^2_2 \nonumber\\
    &\quad+\|z_{k+1}-A_{t,k}x_k^{i+1}-v_k-e_{t,k}+\lambda_k^i\|_2^2\Big)\nonumber\\ 
   &\textbf{for } k=0,\cdots, N-1~\textbf{(in parallel)}\label{eqn_ADMM2_theta_beta_lambda_update} \\
   &\quad\theta_k^{i+1}=\theta_k^i + x_{k+1}^{i+1}-z_{k+1}^{i+1} \nonumber\\
    &\quad\beta_k^{i+1}=\beta_k^{i} + B_{t,k}u_{k}^{i+1} - v_k^{i+1}\nonumber\\
    &\quad\lambda_k^{i+1}=\lambda_k^i + z_{k+1}^{i+1} -A_{t,k}x_{k}^{i+1} - v_{k+1}^{i+1}-e_{t,k}\nonumber
    \end{empheq}
}
\end{subequations}
Note that in Eqn. \eqref{eqn_ADMM2_u_x_update}, the update of primal variables$(U,X)$, when $k=N-1$ the term $\frac{\rho}{2}\|z_{k+2}^i-A_{t,k}x_{k+1}-v_{k+1}^i-e_{t,k}+\lambda_{k+1}^i\|_2^2$ does not appear, and \textbf{the updates of $u_k$ and $x_{k+1}$ are decoupled}; in Eqn. \eqref{eqn_ADMM2_theta_beta_lambda_update}, the update of the dual variables $\{\theta_k,\beta_k,\lambda_k\}_{k=0}^{N-1}$, when $k=0$ the term $x_k^{i+1}$ is $x(t)$ and \textbf{the updates of $\theta_k,\beta_k$, and $\lambda_k$ are decoupled}.

\begin{remark}
    Clearly, the new splitting scheme in Problem \eqref{eqn_MPC_ADMM_2} enables all ADMM updates to be executed in parallel over the prediction horizon, i.e., it is \textbf{parallel-in-horizon}, and the cost scales \textbf{linearly} with the horizon length.
\end{remark}

Although the \textbf{linear parallel-in-horizon} property is appealing, it \textbf{appears} to come at the cost of more complex update steps in Eqns. \eqref{eqn_ADMM2_u_x_update} and \eqref{eqn_ADMM2_v_z_update}, which, unlike the updates in Eqns. \eqref{eqn_ADMM_1_update} and \eqref{eqn_ADMM_2_update}, do not admit closed-form solutions.

However, this paper shows that this is not the case: the update in \eqref{eqn_ADMM2_v_z_update} can in fact be simplified to a closed-form expression, as established in Theorem~\ref{thm_v_z}. Furthermore, a velocity-based formulation is introduced in Subsection~\ref{sec_velocity_formulation}, enabling a closed-form solution for the update in \eqref{eqn_ADMM2_u_x_update} as well, as shown in Theorem~\ref{thm_2}.

\begin{theorem}\label{thm_v_z}
    The update Eqn. \eqref{eqn_ADMM2_v_z_update} admits a closed-form solution as follows,
    \begin{equation}\label{eqn_ADMM2_v_z_update_closed_form}
    \begin{aligned}
    &\textbf{for } k=0,\cdots, N-1~\textbf{(in parallel)}\\
    &\quad\gamma_k = B_{t,k}u_k^{i+1}+\beta_k^i-A_{t,k}x_k^{i+1}-e_{t,k}+\lambda_k^i\\
    &\quad z_{k+1}^{i+1}=\mathrm{Proj}_{\mathcal{X}}\left(\frac{2x_{k+1}^{i+1}+2\theta_k^i+\gamma_k}{3}\right)\\   
    &\quad v_{k}^{i+1}=\frac{1}{2}\Big(z_{k+1}^{i+1}+\gamma_k\Big)
    \end{aligned}
\end{equation}
\end{theorem}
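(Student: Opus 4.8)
The plan is to exploit two structural features of the subproblem \eqref{eqn_ADMM2_v_z_update}: it fully decouples across the horizon index $k$ (so it suffices to treat a single $k$, which I do below, suppressing the superscript $i+1$ on the already-updated primal variables $u_k,x_k,x_{k+1}$ and the superscript $i$ on the stale duals $\theta_k,\beta_k,\lambda_k$), and its nonsmooth part $\Pi_{\mathcal{X}}(z_{k+1})$ constrains only $z_{k+1}$, leaving $v_k$ entirely free. Since the smooth part is a sum of three identity-weighted squares, the objective is jointly convex in $(v_k,z_{k+1})$; I therefore minimize by partial minimization, $\min_{v_k,z_{k+1}}=\min_{z_{k+1}}\big[\min_{v_k}\,\cdot\,\big]$, first eliminating the unconstrained block $v_k$ in closed form and then solving the residual problem in $z_{k+1}$.

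First I would collect the $v_k$-dependent terms, namely $\|B_{t,k}u_k-v_k+\beta_k\|_2^2+\|z_{k+1}-A_{t,k}x_k-v_k-e_{t,k}+\lambda_k\|_2^2$, and set the gradient in $v_k$ to zero. With the shorthand $\gamma_k\triangleq B_{t,k}u_k+\beta_k-A_{t,k}x_k-e_{t,k}+\lambda_k$, the $v_k$-Hessian is a positive multiple of the identity and the stationarity condition is linear, yielding $v_k=\tfrac12(z_{k+1}+\gamma_k)$. This is precisely the claimed back-substitution formula once the optimal $z_{k+1}$ is inserted, and it requires no projection because $v_k$ is unconstrained.

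Next I would substitute this $v_k(z_{k+1})$ back into the objective and combine it with the remaining term $\|x_{k+1}-z_{k+1}+\theta_k\|_2^2$. The reduced $z_{k+1}$-objective then takes the form $\tfrac{c}{2}\|z_{k+1}-w_k\|_2^2+\Pi_{\mathcal{X}}(z_{k+1})$ for a positive scalar $c$ and the explicit center $w_k$ displayed in \eqref{eqn_ADMM2_v_z_update_closed_form}. Because the effective quadratic weight is a positive multiple of the identity, minimizing this expression is, by definition of the Euclidean projection, exactly $z_{k+1}=\mathrm{Proj}_{\mathcal{X}}(w_k)$; feeding this back into $v_k=\tfrac12(z_{k+1}+\gamma_k)$ completes the closed form.

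The main obstacle is the coupling between $v_k$ and $z_{k+1}$ induced by the dynamics square $\|z_{k+1}-A_{t,k}x_k-v_k-e_{t,k}+\lambda_k\|_2^2$: a priori, partial minimization over $v_k$ could leave an \emph{anisotropic} quadratic in $z_{k+1}$, in which case $\argmin_{z_{k+1}}\{\tfrac12(z_{k+1}-w_k)^\top M(z_{k+1}-w_k)+\Pi_{\mathcal{X}}(z_{k+1})\}$ would be an oblique projection in the $M$-metric rather than the ordinary $\mathrm{Proj}_{\mathcal{X}}$. Hence the crux of the proof is the Schur-complement-type verification that eliminating $v_k$ from the two identity-weighted squares that contain it leaves, together with the first square, an effective Hessian in $z_{k+1}$ that is still a scalar multiple of the identity. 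The identity-weighting of all three splitting penalties is exactly what guarantees this, so that no change of metric is needed and the plain Euclidean projection onto $\mathcal{X}$ is exact.
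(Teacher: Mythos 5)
Your strategy is the same as the paper's: the paper also eliminates the unconstrained block $v_k$ first (via the KKT stationarity condition, which for an unconstrained variable is exactly your partial minimization), obtains $v_k^*=\tfrac12(z_{k+1}^*+\gamma_k)$, substitutes back, and observes that the reduced quadratic in $z_{k+1}$ has Hessian $\tfrac{3}{2}\rho I$, so the ordinary Euclidean projection applies. Your identification of the isotropy of the reduced Hessian as the crux is also exactly the point the paper's proof relies on.

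The one place you stop short is the only nontrivial arithmetic: you assert that the reduced objective's center $w_k$ is ``the explicit center displayed in \eqref{eqn_ADMM2_v_z_update_closed_form}'' without computing it. Carrying out the substitution with $d\triangleq x_{k+1}^{i+1}+\theta_k^i$, $a\triangleq B_{t,k}u_k^{i+1}+\beta_k^i$, $b\triangleq -A_{t,k}x_k^{i+1}-e_{t,k}+\lambda_k^i$ (so $\gamma_k=a+b$), the two $v_k$-coupled squares collapse to $\tfrac12\|z_{k+1}-(a-b)\|_2^2$, and the reduced problem is
\begin{equation*}
\min_{z_{k+1}\in\mathcal{X}}\ \tfrac{3}{4}\|z_{k+1}\|_2^2-z_{k+1}^\top\Bigl(d+\tfrac{a-b}{2}\Bigr),
\end{equation*}
whose minimizer is $\mathrm{Proj}_{\mathcal{X}}\bigl(\tfrac{2d+a-b}{3}\bigr)$ with $a-b=\gamma_k+2(A_{t,k}x_k^{i+1}+e_{t,k}-\lambda_k^i)$. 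This is the expression the paper's own proof (and the later iteration \eqref{eqn_ADMM_closed_iter_b} and Algorithm~\ref{alg_ADMM_MPC}) arrives at, namely $\mathrm{Proj}_{\mathcal{X}}\bigl(\tfrac{2(x_{k+1}^{i+1}+\theta_k^i+A_{t,k}x_{k}^{i+1}+e_{t,k}-\lambda_k^i)+\gamma_k}{3}\bigr)$, and it does \emph{not} coincide with the formula $\mathrm{Proj}_{\mathcal{X}}\bigl(\tfrac{2x_{k+1}^{i+1}+2\theta_k^i+\gamma_k}{3}\bigr)$ printed in the theorem statement, which omits the $2(A_{t,k}x_k^{i+1}+e_{t,k}-\lambda_k^i)$ contribution. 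By deferring to the displayed formula rather than deriving $w_k$, you have implicitly endorsed a center that the elimination does not actually produce; doing the two lines of algebra would have both closed the gap in your argument and exposed this inconsistency in the statement.
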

\begin{proof}
Applying the Karush–Kuhn–Tucker (KKT) condition (see \cite[Ch 5]{boyd2004convex}) on Eqn. \eqref{eqn_ADMM2_v_z_update}, the KKT equation related to the  unconstrained $v_k$ is
\[
   \rho\Big(2v_k^*-(B_{t,k}u_k^{i+1}+\beta_k^i)+(A_{t,k}x_k^{i+1}+e_{t,k}-\lambda_k^i)-z_{k+1}^*\Big)=0,
\]
For simplicity, we introduce the vector $\gamma_k$:
\[
\gamma_k \triangleq B_{t,k}u_k^{i+1}+\beta_k^i-A_{t,k}x_k^{i+1}-e_{t,k}+\lambda_k^i
\]
thus we can represent the optimal solution $v_k^*$ with the optimal solution $z_{k+1}^*$: $v_k^*=\frac{1}{2}\Big(z_{k+1}^* + \gamma_k \Big).$ By substituting the above relationship into the update Eqn. \eqref{eqn_ADMM2_v_z_update}, we obtain an optimization problem involving only $z_{k+1}$:
\[
\begin{aligned}
    \min_{z_{k+1}\in\mathcal{X}}&~\frac{3}{4}z_{k+1}^\top z_{k+1}\\
    &~-z_{k+1}^\top\left(x_{k+1}^{i+1}+\theta_k^i+A_{t,k}x_{k}^{i+1}+e_{t,k}-\lambda_k^i+\frac{\gamma_k}{2}\right), 
\end{aligned}
\]
whose Hessian matrix is the identity matrix $I$, and which consequently yields the following closed-form solution,
\[
z_{k+1}^*=\mathrm{Proj}_{\mathcal{X}}\left(\frac{2(x_{k+1}^{i+1}+\theta_k^i+A_{t,k}x_{k}^{i+1}+e_{t,k}-\lambda_k^i)+\gamma_k}{3}\right),
\]
such as when the closed convex set $\mathcal{X}$ is box constraint: $\mathcal{X}=\{x:x_{\min}\leq x \leq x_{\max}\}$, halfspace: $\mathcal{X}=\{x:a^\top x\leq b\}$ (or affine hyperplane: $\mathcal{X}=\{x:a^\top x=b\}$), second-order cone $\mathcal{X}=\{(x,r):\|x\|_2\leq r\}$, $\ell_1$ ball: $\mathcal{X}=\{x:\|x\|_1\leq r \}$, see \cite[Ch. 6]{parikh2014proximal}. This completes the proof.
\end{proof}

\subsection{Closed-form ADMM Sub-steps via Velocity-based Formulation}\label{sec_velocity_formulation}
In the update Eqn. \eqref{eqn_ADMM2_u_x_update}, only $u_k$ is subject to the constraint  $\mathcal{U}$, whereas $x_{k+1}$  is unconstrained. Consequently, $u_k$ does not admit a closed-form solution, while $x_{k+1}$ does.

To make the update Eqn. \eqref{eqn_ADMM2_u_x_update} admitting a closed-form solution, one approach is to incorporate the control input $u_k$ into the state $x_{k+1}$ through the increased control input $\Delta u_k$.

\begin{theorem}\label{thm_2}
     By using the velocity-based formulation
\begin{equation}
    \begin{aligned}
        \Bar{x}_{k+1} =\Bar{A}_{t,k} \Bar{x}_k + \Bar{B}_{t,k}\Delta u_k + \Bar{e}_{t,k}
    \end{aligned}
\end{equation}
with $\Bar{x}_k\triangleq\mathrm{col}(x_{k}, u_{k-1})$, $\Bar{e}_{t,k}\triangleq\mathrm{col}(e_{t,k}, 0)$,
\begin{align*}
 \Bar{A}_{t,k}&\triangleq \left[\begin{array}{cc}
        A_{t,k} & B_{t,k} \\
        0 & I
    \end{array}\right],~\Bar{B}_{t,k}\triangleq \left[\begin{array}{c}
         B_{t,k}  \\
         I
    \end{array}\right]
\end{align*}
the corresponding splitting ADMM updates in \eqref{eqn_ADMM2_iter} all admit closed-form solutions.
\end{theorem}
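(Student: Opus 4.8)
The plan is to show that the velocity-based substitution converts the single troublesome sub-step into an unconstrained quadratic, after which every update inherits a closed form. Concretely, I would first substitute the augmented dynamics $\bar{x}_{k+1}=\bar{A}_{t,k}\bar{x}_k+\bar{B}_{t,k}\Delta u_k+\bar{e}_{t,k}$ into Problem \eqref{eqn_MPC_ADMM_2}, using $\bar{x}_k=\mathrm{col}(x_k,u_{k-1})$ and the free increment $\Delta u_k=u_k-u_{k-1}$, and verify by block multiplication that the upper block reproduces $x_{k+1}=A_{t,k}x_k+B_{t,k}u_k+e_{t,k}$ while the lower block enforces the bookkeeping identity $u_k=u_{k-1}+\Delta u_k$. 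The decisive structural gain is that the original input $u_k$, which carried the constraint $u_k\in\mathcal{U}$, now appears only as the lower block of the augmented state $\bar{x}_{k+1}=\mathrm{col}(x_{k+1},u_k)$, whereas the genuinely new decision variable $\Delta u_k$ is unconstrained.

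Next I would re-apply the splitting scheme of \eqref{eqn_MPC_ADMM_2} verbatim to the augmented system: introduce copies $\bar{v}_k$ of $\bar{B}_{t,k}\Delta u_k$ and $\bar{z}_{k+1}$ of $\bar{x}_{k+1}$, and relocate both inequality constraints onto the copy $\bar{z}_{k+1}$. Since $\bar{x}_{k+1}=\mathrm{col}(x_{k+1},u_k)$, the requirements $x_{k+1}\in\mathcal{X}$ and $u_k\in\mathcal{U}$ constrain complementary blocks of the same augmented-state copy, so the feasible set for $\bar{z}_{k+1}$ is the Cartesian product $\mathcal{X}\times\mathcal{U}$, whose projection decomposes blockwise into $\mathrm{Proj}_{\mathcal{X}}$ and $\mathrm{Proj}_{\mathcal{U}}$. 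The input-tracking cost $\tfrac{1}{2}\|u_k-r_u\|_{W_u}^2$ likewise becomes a quadratic in the lower block of the augmented state, so the primal $(\Delta u_k,\bar{x}_{k+1})$ objective contains no indicator function.

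With this reformulation the three updates are treated in turn. The primal step, the analog of \eqref{eqn_ADMM2_u_x_update}, decouples across the horizon and, within each $k$, separates into updates for $\Delta u_k$ and $\bar{x}_{k+1}$ exactly as noted below \eqref{eqn_ADMM2_iter}; both are now unconstrained quadratics, hence solved by a single small fixed-size linear system, which is closed-form and parallel-in-horizon. The copy step, the analog of \eqref{eqn_ADMM2_v_z_update}, is handled by the argument of Theorem~\ref{thm_v_z}: eliminate $\bar{v}_k$ through its stationarity condition, substitute to obtain a problem in $\bar{z}_{k+1}$ alone with identity Hessian, and conclude with the blockwise projection onto $\mathcal{X}\times\mathcal{U}$. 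The dual step \eqref{eqn_ADMM2_theta_beta_lambda_update} is already explicit.

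The point requiring the most care is well-posedness of the $\Delta u_k$ update, since after relocating the input cost there may be no penalty on $\Delta u_k$ itself. Here the identity blocks of the velocity formulation do the work: $\bar{B}_{t,k}^\top\bar{B}_{t,k}=B_{t,k}^\top B_{t,k}+I\succ 0$, so the Hessian $\rho(B_{t,k}^\top B_{t,k}+I)$ of the $\Delta u_k$ subproblem is positive definite and its closed-form solution is unique; analogously the $\rho I$ term contributed by the $\bar{z}_{k+1}$-copy penalty renders the $\bar{x}_{k+1}$ Hessian positive definite. I expect the main obstacle to be the careful verification that the cost and constraint relocations are exact, i.e.\ that the augmented reformulation is genuinely equivalent to Problem \eqref{eqn_MPC_ADMM_2}, and that the copy set is precisely the product $\mathcal{X}\times\mathcal{U}$, since these are exactly what guarantee that the primal sub-step is unconstrained and therefore admits a closed form.
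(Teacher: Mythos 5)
Your proposal is correct and follows essentially the same route as the paper: relocate $u_k$ into the augmented state so that $\mathcal{U}$ is enforced on the copy $z_{k+1}$ via projection onto $\bar{\mathcal{X}}=\mathcal{X}\times\mathcal{U}$, leaving the $(\Delta u_k,\bar{x}_{k+1})$ step as decoupled unconstrained quadratics with Hessians $\rho(B_{t,k}^\top B_{t,k}+I)\succ 0$ and $\bar{Q}+\rho I+\rho\bar{A}_{t,k+1}^\top\bar{A}_{t,k+1}\succ 0$, and handle the copy step by the elimination argument of Theorem~\ref{thm_v_z}. Your explicit attention to the exactness of the cost/constraint relocation and the blockwise decomposition of the projection is a sound elaboration of what the paper leaves implicit.
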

\begin{proof}
After adopting the velocity-based MPC formulation, Problem \eqref{eqn_MPC} and applying splitting scheme \eqref{eqn_MPC_ADMM_2} can result in the following problem:
\begin{equation}\label{eqn_MPC_1}
    \begin{aligned}
    \min_{\Delta U,\Bar{X},V,Z}&~ \sum_{k=0}^{N-1} \frac{1}{2}\Bar{x}_{k+1}^\top \Bar{Q}\Bar{x}_{k+1}  - \bar{q}^\top \bar{x}_{k+1} + \Pi_{\mathcal{\Bar{X}}}(z_{k+1})\\  
        \text{s.t.}&~ \Bar{x}_{k+1} - z_{k+1}=0,~k=0,\cdots,N-1\\
    &~ \Bar{B}_{t,k}\Delta u_k - v_k=0,~k=0,\cdots,N-1\\
    &~ z_{k+1}-\Bar{A}_{t,k}\Bar{x}_k-v_k-\Bar{e}_{t,k}=0,~k=0,\cdots,N-1\\
        &~ \Bar{x}_0 = \left[\begin{array}{c}
             x(t)  \\
             u(t-1) 
        \end{array}\right]
    \end{aligned}
\end{equation}
where $\Bar{Q}\triangleq\diag(C^\top W_y C,W_u)$, $\bar{q}\triangleq\mathrm{col}(C^\top W_yr_y,W_ur_u)$, the set $\Bar{\mathcal{X}}\triangleq\mathcal{X}\times \mathcal{U}$, $\Delta U\triangleq\mathrm{col}(\Delta u_0,\cdots,\Delta u_{N-1})$, and $\Bar{X}\triangleq\mathrm{col}(\Bar{x}_1,\cdots,\Bar{x}_{N})$. 

Similar to the ADMM iteration \eqref{eqn_ADMM2_iter} and according to Theorem \ref{thm_v_z}, all updates $i$-th ADMM iteration for Problem \eqref{eqn_MPC_1} 
admit closed-form solutions, as given by
\begin{subequations}\label{eqn_ADMM_closed_iter}
{\footnotesize
\begin{empheq}[left=\empheqlbrace]{align}
&\textbf{for } k=0,\cdots, N-1~\textbf{(in parallel)}\label{eqn_ADMM_closed_iter_a} \\
    &\quad \Delta u_{k}^{i+1}=\argmin_{\Delta u_k}\frac{\rho}{2}\|\Bar{B}_{t,k}\Delta u_k-v_k^i+\beta_k^i\|_2^2 \nonumber\\
     &\quad\quad\quad\quad=\left(\Bar{B}_{t,k}^\top\Bar{B}_{t,k}\right)^{-1}\Bar{B}_{t,k}^\top\left(v_k^i-\beta_k^i\right) \nonumber \\
     &\quad \Bar{x}_{k+1}^{i+1}=\argmin_{\Bar{x}_{k+1}}~ \frac{1}{2}\Bar{x}_{k+1}^\top \Bar{Q}\Bar{x}_{k+1}- \bar{q}^\top \bar{x}_{k+1}\nonumber\\
     &\quad\quad\quad +\frac{\rho}{2}\|\Bar{x}_{k+1}-z_{k+1}^i+\theta_k^i\|_2^2 \nonumber\\
     &\quad\quad\quad + \frac{\rho}{2}\|z_{k+2}^i-\Bar{A}_{t,k}\Bar{x}_{k+1}-v_{k+1}^i-\Bar{e}_{t,k}+\lambda_{k+1}^i\|_2^2\nonumber\\
     &\quad\quad\quad~= H_k h_k\nonumber\\
    &\textbf{for } k=0,\cdots, N-1~\textbf{(in parallel)}\label{eqn_ADMM_closed_iter_b}\\
    &\quad \gamma_k = \Bar{B}_{t,k}\Delta u_k^{i+1}+\beta_k^i-\Bar{A}_{t,k}\Bar{x}_k^{i+1}-\Bar{e}_{t,k}+\lambda_k^i\nonumber\\
    &\quad z_{k+1}^{i+1}=\mathrm{Proj}_{\mathcal{\Bar{X}}}\left(\!\frac{2(\Bar{x}_{k+1}^{i+1}\!+\!\theta_k^i\!+\!\Bar{A}_{t,k}\Bar{x}_{k}^{i+1}\!+\!\Bar{e}_{t,k}\!-\!\lambda_k^i)\!+\!\gamma_k}{3}\!\right)\nonumber\\
    &\quad v_{i}^{i+1}=\frac{1}{2}\Big(z_{k+1}^{i+1}+\gamma_k\Big)\nonumber\\ 
   &\textbf{for } k=0,\cdots, N-1~\textbf{(in parallel)}\\
   &\quad\theta_k^{i+1}=\theta_k^i + \Bar{x}_{k+1}^{i+1}-z_{k+1}^{i+1}\nonumber \\
    &\quad\beta_k^{i+1}=\beta_k^{i} + \Bar{B}_{t,k}\Delta u_{k}^{i+1} - v_k^{i+1}\nonumber\\
    &\quad\lambda_k^{i+1}=\lambda_k^i + z_{k+1}^{i+1} -\Bar{A}_{t,k}\Bar{x}_{k}^{i+1} - v_{k}^{i+1}-\Bar{e}_{t,k}\nonumber
    \end{empheq}
}
\end{subequations}
where in Eqn. \eqref{eqn_ADMM_closed_iter_a} $\Bar{B}_{t,k}^\top\Bar{B}_{t,k}=B_{t,k}^\top B_{t,k}+I\succ0$ (ensuring the existence and uniqueness of the solution $\{\Delta u_k\}_{k=0}^{N-1}$)
and 
{
\footnotesize
\[
    \begin{aligned}
        H_k&\triangleq\left\{ \begin{array}{l}
                (\Bar{Q}+\rho I)^{-1},~\text{if }k=N-1\\
                (\Bar{Q}+\rho I+\rho\Bar{A}_{t,k+1}^\top \Bar{A}_{t,k+1})^{-1},~\text{else}           
            \end{array}\right. \\
        h_k&\triangleq\left\{\begin{array}{l}
            \bar{q}+\rho(z_{k+1}^i-\theta_k^i),~ \text{if }k=N-1\\
                 \bar{q}+\rho\Big(z_{k+1}^i-\theta_k^i+\Bar{A}_{t,k+1}^\top(z_{k+2}^i-v_{k+1}^i-\Bar{e}_{t,k}+\lambda_{k+1}^i)\Big),\text{else} 
            \end{array} \right.
   \end{aligned}
\]
}
where $H_k\succ0$ (as $H_k$ includes the term $\rho I$) for $k=0,\cdots,N_1$ (ensuring the existence and uniqueness of the solution $\{\bar{x}_{k+1}\}_{k=0}^{N-1}$). This completes the proof.
\end{proof}

\begin{remark}
\eqref{eqn_ADMM_closed_iter_a} involves the inverse of smaller matrices, such as  $\left(\Bar{B}_{t,k}^\top\Bar{B}_{t,k}\right)^{-1}$ and $(\Bar{Q}+\rho I+\rho\Bar{A}_{t,k+1}^\top \Bar{A}_{t,k+1})^{-1}$, whose computational cost is independent of the prediction horizon length $N$. Summarizing all the above, a \textbf{parallel-in-horizon} and \textbf{construction-free} ADMM algorithm tailored for MPC is shown in the \textbf{linear- and parallel-in-horizon} ADMM iteration \eqref{eqn_ADMM_closed_iter}. 
\end{remark}

\subsection{Residuals for Stopping Criteria}
For easy demonstration, we denote
\[
p\triangleq\mathrm{col}(\Delta U, \Bar{X}),~q\triangleq\mathrm{col}(V,Z),~s\triangleq\mathrm{col}(\Phi,\mathcal{B},\Lambda)
\]
Problem \eqref{eqn_MPC_1} can be rewritten as follows,
\begin{equation}\label{eqn_general_QP}
    \min~ f(p) + g(q),~ Fp + Gq = h 
\end{equation}
where the smooth function $f(p)$ and the indicator function $g(q)$ are both convex, as given by
\[
f(p)\triangleq \sum_{k=0}^{N-1} \frac{1}{2}\Bar{x}_{k+1}^\top \Bar{Q}\Bar{x}_{k+1}-\bar{q}^\top \bar{x},~g(q)\triangleq  \sum_{k=0}^{N-1}\Pi_{\mathcal{\Bar{X}}}(z_{k+1}),
\]
and the sparse matrices $F,G$ and the vector $h$ are corresponding to equality constraints of Problem \eqref{eqn_MPC_1}. As for the optimality conditions for the scaled ADMM iteration, the solution $p^*, q^*, s^*$ must satisfy the primal and dual feasibility conditions, as given by
\begin{subequations}\label{eqn_dual_opt}
    \begin{align}
        0= Fp^*+Gq^*-h&\quad\triangleright\textcolor{gray}{primal~ feasibility}\\
        0\in\partial f(p^*)+ \rho F^\top s^*&\quad\triangleright\textcolor{gray}{dual~ feasibility}\label{eqn_dual_opt_b}\\
        0\in\partial g(q^*)+ \rho G^\top s^*&\quad\triangleright\textcolor{gray}{dual~ feasibility}\label{eqn_dual_opt_c}
    \end{align}
\end{subequations}
After the $i$-th scaled-form ADMM iteration, $(p^{i+1},q^{i+1},s^{i+1})$ satisfies the following
\begin{subequations}
    \begin{align}
        0&\in\partial f(p^{i+1}) +\rho F^\top (s^{i}+Fp^{i+1}+Gq^{i}-h)\label{eqn_p_q_s_a}\\
        0&\in\partial g(q^{i+1}) + \rho G^\top(s^{i}+Fp^{i+1}+Gq^{i+1}-h)\label{eqn_p_q_s_b}    \\
        &=\partial g(q^{i+1}) + \rho G^\top s^{i+1}\quad\triangleright\textcolor{gray}{using ~\eqref{eqn_p_q_s_c}}\nonumber\\
        s^{i+1}& = s^{i}+Fp^{i+1}+Gq^{i+1}-h\label{eqn_p_q_s_c}
    \end{align}
\end{subequations}
which shows that the second dual optimality condition \eqref{eqn_dual_opt_c} is always satisfied. To obtain a useful expression for the condition \eqref{eqn_dual_opt_b}: $ 0\in\partial f(p^{i+1}) +\rho F^\top s^{i+1}$, substituting \eqref{eqn_p_q_s_c} into \eqref{eqn_p_q_s_a} yields
\[
\begin{aligned}
   0&\in\partial f(p^{i+1})\\
   &\quad+\rho F^\top\big(s^{i+1}-Fp^{i+1}-Gq^{i+1}+h +Fp^{i+1}+Gq^{i}-h \big)\\
   &=\partial f(p^{i+1}) + \rho F^\top s^{i+1} -\rho F^\top G(q^{i+1}-q^{i})\\
   &\quad (\text{as }f~\text{is smooth}, \in \text{can be replaced with} =)\\
   \Leftrightarrow&\quad\rho F^\top G(q^{i+1}-q^{i})= \partial f(p^{i+1}) + \rho F^\top s^{i+1} 
\end{aligned}
\]
\begin{lemma}
(see \cite[Appendix A]{boyd2011distributed}): Under Assumption in Remark \ref{remark_1}, the scaled-form ADMM iterates for Problem \eqref{eqn_general_QP} satisfy
\[
\sum_{i=0}^{\infty}\rho\|s^{i+1}-s^i\|_2^2 + \rho\|G(q^{i+1}-q^i)\|_2^2\leq V^0
\]
($V^0$: a positive constant) namely, $\|s^{i+1}-s^i\|_2^2\rightarrow0$ and $\|G(q^{i+1}-q^i)\|_2^2\rightarrow0$, which implies the primal residual ($Fp^{i+1}+Gq^{i+1}-h$) and dual residual ($\rho F^\top G(q^{i+1}-q^{i})$) both converge to zeros.
\end{lemma}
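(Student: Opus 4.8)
The plan is to follow the standard Lyapunov-based convergence argument for two-block ADMM as in \cite[Appendix~A]{boyd2011distributed}, adapted to the scaled-form splitting $f(p)+g(q)$ with coupling $Fp+Gq=h$ in \eqref{eqn_general_QP}. Because Remark~\ref{remark_1} guarantees that $f$ and $g$ are closed, proper, and convex and that Problem~\eqref{eqn_MPC_1} is feasible, the unaugmented Lagrangian admits a saddle point $(p^*,q^*,s^*)$ satisfying the KKT conditions \eqref{eqn_dual_opt}. The key device is the candidate Lyapunov function
\[
V^i \triangleq \rho\|s^i-s^*\|_2^2 + \rho\|G(q^i-q^*)\|_2^2 \ge 0,
\]
and the entire argument reduces to establishing a one-step sufficient-decrease inequality for $V^i$.

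First, I would derive two variational inequalities from the subproblem optimality conditions. From the $p$-update defining $p^{i+1}$ together with convexity of $f$, and from the $q$-update defining $q^{i+1}$ together with convexity of $g$, combined with the dual recursion \eqref{eqn_p_q_s_c} (so that the primal residual equals $s^{i+1}-s^i$), one obtains an upper bound on $f(p^{i+1})+g(q^{i+1})-f(p^*)-g(q^*)$. Pairing this with the lower bound coming from saddle-point optimality of $(p^*,q^*,s^*)$ and cancelling the objective gap yields, after completing the square, the decrease
\[
V^{i+1} \le V^i - \rho\|s^{i+1}-s^i\|_2^2 - \rho\|G(q^{i+1}-q^i)\|_2^2.
\]

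Second, summing this telescoping inequality over $i=0,1,2,\dots$ and using $V^{i}\ge 0$ gives the claimed bound $\sum_{i=0}^{\infty}\rho\|s^{i+1}-s^i\|_2^2+\rho\|G(q^{i+1}-q^i)\|_2^2\le V^0$. Finiteness of the series forces its summands to vanish, i.e. $\|s^{i+1}-s^i\|_2\to 0$ and $\|G(q^{i+1}-q^i)\|_2\to 0$. Because the primal residual is exactly $Fp^{i+1}+Gq^{i+1}-h=s^{i+1}-s^i$ and the dual residual $\rho F^\top G(q^{i+1}-q^i)$ is a fixed linear image of $G(q^{i+1}-q^i)$, both residuals converge to zero, which is the assertion.

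The main obstacle is the first step: establishing the sufficient-decrease inequality. The delicate part is handling the cross terms that appear when the two variational inequalities are added — these must be reorganized via the identity $Fp^{i+1}+Gq^{i+1}-h=s^{i+1}-s^i$ and a completion-of-squares manipulation so that the indefinite inner products collapse into the two nonnegative quadratics $\rho\|s^{i+1}-s^i\|_2^2$ and $\rho\|G(q^{i+1}-q^i)\|_2^2$. I would also verify that the $q$-subproblem's optimality condition is evaluated at $s^{i}+Fp^{i+1}+Gq^{i+1}-h=s^{i+1}$, as already noted in \eqref{eqn_p_q_s_b}, since this alignment of the dual variable is precisely what makes the telescoping clean.
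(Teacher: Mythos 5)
Your proposal is correct and follows essentially the same route as the paper, which does not spell out a proof but defers entirely to the Lyapunov argument of \cite[Appendix~A]{boyd2011distributed}: you reconstruct exactly that argument (saddle-point existence from convexity and feasibility with affine coupling, the sufficient-decrease inequality for $V^i=\rho\|s^i-s^*\|_2^2+\rho\|G(q^i-q^*)\|_2^2$, telescoping, and the identification of the primal residual with $s^{i+1}-s^i$ and of the dual residual as a fixed linear image of $G(q^{i+1}-q^i)$). No gaps beyond those already implicit in the paper's own appeal to the cited reference.
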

Furthermore, we introduce
\[
c^{i+1} \triangleq\rho\|s^{i+1}-s^i\|_2^2 + \rho\|G(q^{i+1}-q^i)\|_2^2
\]
which denotes the sum of the Euclidean norms of the residuals. \cite{he2015non} proves that $c^{i+1}$ decreases monotonically (noting that we use the scaled form, differing from the standard form in \cite{he2015non}), which is key to proving convergence for the restart method present in the following Subsection.
\begin{lemma}
(see \cite[Theorem 4.1]{he2015non})
    Under Assumption in Remark \ref{remark_1}, the scaled-form ADMM iterates for Problem \eqref{eqn_general_QP} satisfy
    \[
        c^{i+1}\leq c^i.
    \]
\end{lemma}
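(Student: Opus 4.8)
The plan is to recast the scaled-form ADMM iteration \eqref{eqn_ADMM_closed_iter} for Problem \eqref{eqn_general_QP} as a monotone-operator / variational-inequality recursion on the combined variable $v^i \triangleq \mathrm{col}(q^i, s^i)$, and to show that $c^{i+1}$ equals, up to the fixed factor $\rho$, a weighted squared norm $\|v^{i+1}-v^i\|_H^2$ of the successive increment for an appropriate symmetric positive semidefinite matrix $H$. Since the result is quoted from \cite[Theorem 4.1]{he2015non}, where the standard (unscaled) form is used, the substantive work is to re-derive the monotonicity $\|v^{i+1}-v^i\|_H^2 \le \|v^i-v^{i-1}\|_H^2$ directly in the scaled form so that it matches the definition of $c^{i+1}$ adopted here.

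First I would collect the optimality conditions already obtained in \eqref{eqn_p_q_s_a}--\eqref{eqn_p_q_s_c}, written at both iterations $i$ and $i+1$. From the $q$-update one has the exact inclusion $-\rho G^\top s^{i+1}\in\partial g(q^{i+1})$; subtracting the same inclusion at iteration $i$ and invoking monotonicity of the subdifferential $\partial g$ gives the first key inequality
\[
\big(G(q^{i+1}-q^i)\big)^\top (s^{i+1}-s^i)\le 0 .
\]
For the $p$-update, smoothness of $f$ turns the inclusion into the equality $\nabla f(p^{i+1}) = \rho F^\top G(q^{i+1}-q^i) - \rho F^\top s^{i+1}$; writing the same relation at iteration $i$ and using monotonicity of $\nabla f$ yields the second key inequality, of the form
\[
(p^{i+1}-p^i)^\top\Big(\rho F^\top G\big((q^{i+1}-q^i)-(q^i-q^{i-1})\big) - \rho F^\top (s^{i+1}-s^i)\Big)\ge 0 .
\]

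Next I would eliminate $p^{i+1}-p^i$ using the dual-update identity $s^{i+1}-s^i = Fp^{i+1}+Gq^{i+1}-h$ of \eqref{eqn_p_q_s_c} (and its counterpart at step $i$), which expresses $F(p^{i+1}-p^i)$ through the increments of $s$ and $Gq$. Substituting this into the two monotonicity inequalities and adding them, the terms linear in the operators $F,G$ are designed to cancel pairwise, leaving an inner product of the consecutive increments $v^{i+1}-v^i$ and $v^i-v^{i-1}$ in the $H$-metric. The algebraic identity
\[
\|a\|_H^2-\|b\|_H^2 = 2\,b^\top H (a-b) + \|a-b\|_H^2,
\]
applied with $a = v^i-v^{i-1}$ and $b = v^{i+1}-v^i$, then converts the accumulated nonnegative monotonicity contributions into
\[
\|v^i-v^{i-1}\|_H^2 - \|v^{i+1}-v^i\|_H^2 \ge \big\|(v^i-v^{i-1})-(v^{i+1}-v^i)\big\|_H^2 \ge 0,
\]
which is exactly $c^{i+1}\le c^i$ once the equivalence $c^{i+1}=\rho\|v^{i+1}-v^i\|_H^2$ is recorded.

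The main obstacle I anticipate is the bookkeeping in the cancellation step: the cross term $\rho F^\top G(q^{i+1}-q^i)$ introduced by the smoothness of $f$ must be paired correctly with the primal-residual identity so that the residual quantity collapses precisely to the two squared norms appearing in $c^{i+1}$, with no surplus indefinite term surviving. A secondary subtlety is that $G$ (and $F$) need not have full column rank, so the argument must stay at the level of the images $G(q^{i+1}-q^i)$ and $s^{i+1}-s^i$, never of $q^{i+1}-q^i$ itself; this is consistent with $c^{i+1}$ being defined through $\|G(q^{i+1}-q^i)\|_2^2$ rather than $\|q^{i+1}-q^i\|_2^2$, and it is also what makes the positive semidefinite (rather than definite) weighting $H$ the natural object, mirroring the scaled-form adaptation of \cite{he2015non}.
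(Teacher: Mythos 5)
The paper does not actually prove this lemma: it is imported wholesale from \cite[Theorem 4.1]{he2015non}, with only the parenthetical remark that the scaled dual variable turns $\tfrac{1}{\rho}\|\mu^{i+1}-\mu^{i}\|_2^2$ into $\rho\|s^{i+1}-s^{i}\|_2^2$. Measured against that, your reconstruction gets the outer architecture right and it is the architecture of He--Yuan's argument: the identification $c^{i+1}=\|v^{i+1}-v^{i}\|_H^2$ with $v=\mathrm{col}(q,s)$ and $H=\mathrm{diag}(\rho G^\top G,\rho I)$, the inclusions $-\rho G^\top s^{j}\in\partial g(q^{j})$ and the identity $\nabla f(p^{j})=\rho F^\top G(q^{j}-q^{j-1})-\rho F^\top s^{j}$, the elimination of $F(p^{i+1}-p^{i})$ through the dual-residual recursion, the insistence on working only with $G(q^{i+1}-q^{i})$ rather than $q^{i+1}-q^{i}$, and the closing identity $\|a\|_H^2-\|b\|_H^2=2b^\top H(a-b)+\|a-b\|_H^2$ with $b=v^{i+1}-v^{i}$, $a=v^{i}-v^{i-1}$.

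The gap sits exactly where you predicted it would: the ``pairwise cancellation'' is asserted, and it does not happen. Write $u=s^{i+1}-s^{i}$, $u'=s^{i}-s^{i-1}$, $w=G(q^{i+1}-q^{i})$, $w'=G(q^{i}-q^{i-1})$. Your two monotonicity inequalities become $\langle w,u\rangle\le 0$ (and likewise $\langle w',u'\rangle\le 0$) and, after substituting $F(p^{i+1}-p^{i})=u-u'-w$,
\begin{equation*}
\langle u-u'-w,\; w-w'-u\rangle\ \ge\ 0 .
\end{equation*}
Expanding the latter gives
\begin{equation*}
\langle u,u'\rangle+\langle w,w'\rangle-\|u\|^2-\|w\|^2\ \ge\ -2\langle u,w\rangle+\langle u,w'\rangle+\langle u',w\rangle-\langle u',w'\rangle ,
\end{equation*}
so the target $b^\top H(a-b)\ge 0$ (the left-hand side being nonnegative) would follow only if the cross quantity $\langle u,w'\rangle+\langle u',w\rangle$ could be bounded \emph{below} by $2\langle u,w\rangle+\langle u',w'\rangle$. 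Monotonicity of $\partial g$ applied to any pair of iterates only yields inequalities of the form $\langle G(q^{a}-q^{b}),s^{a}-s^{b}\rangle\le 0$, i.e.\ \emph{upper} bounds on such cross terms; it never produces the needed lower bound, so an indefinite second-difference term $\langle u-u',\,w-w'\rangle$ survives your addition step. This is precisely why He and Yuan do not add the two monotonicity inequalities directly: they first fold all three optimality conditions into a single variational inequality whose right-hand side already carries the $H$-metric (exploiting the skew-symmetry of the affine KKT operator so that the primal cross terms cancel exactly), and only then apply the polarization identity. To make your route rigorous you would need to reproduce that intermediate variational-inequality lemma (or an equivalent bound on the surviving cross term); as written, the crux step is a genuine gap rather than bookkeeping.
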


\begin{remark}
Thus, a reasonable termination criterion for detecting optimality is 
$
c^{i+1}\leq\epsilon.
 $
Specifically, in the proposed \textbf{linear-} and \textbf{parallel-in-horizon} ADMM-based MPC algorithm \eqref{eqn_ADMM_closed_iter}, the instantiated $c^{i+1}$ are given by
\[
    \begin{aligned}
        c^{i+1}&=\rho\sum_{k=0}^{N-1}\|\theta_k^{i+1}-\theta^i\|_2^2+\|\beta_k^{i+1}-\beta^i\|_2^2+\|\lambda_k^{i+1}-\lambda^i\|_2^2 \\ &\quad\quad+\|z^{i+1}_{k+1}-\hat{z}^i_k\|_2^2+\|v_k^{i+1}-v^i_k\|_2^2\\
        &\quad\quad+\|(z_{k+1}^{i+1}- v_{k}^{i+1}) - (z_{k+1}^i -v_{k}^i)\|_2^2\\
        &\leq \epsilon 
    \end{aligned}
\]
\end{remark}

\subsection{Accelerated ADMM with Restart}
Although the per-iteration complexity of the proposed \textbf{linear-} and \textbf{parallel-in-horizon} ADMM-based MPC algorithm \eqref{eqn_ADMM_closed_iter} scales favorably with problem size, the total number of iterations typically increases for high-dimensional problems due to poorer operator conditioning and stronger variable coupling, especially for large horizon lengths $N$. To mitigate this, accelerated ADMM variants, including Nesterov-type accelerated schemes with established global convergence guarantees for strongly convex problems, can yield a significant speed up. However, in our novel-splitting Problem \eqref{eqn_MPC_1} (or \eqref{eqn_general_QP}), the objective functions $f(p)$ and $g(q)$ are only convex.

Ref. \cite{goldstein2014fast} introduces an accelerated ADMM scheme for convex problems. In contrast to the strongly convex setting, no global convergence rate is derived; instead, convergence is enforced through the use of Nesterov’s acceleration together with an appropriate restart strategy. The pseudo-code of the accelerated ADMM with restart is shown in \eqref{eqn_faster_pseudo_code}. Nevertheless, the empirical behavior of  \eqref{eqn_faster_pseudo_code} is superior to that of the original ADMM, even in the case of strongly convex functions. From \cite[Sec 4.3]{goldstein2014fast}, the accelerated ADMM with restart scheme in \eqref{eqn_faster_pseudo_code} for Problem \eqref{eqn_general_QP} is convergent.

\begin{lemma}
    (see \cite[Thm. 3]{goldstein2014fast}): For convex functions $f(p)$ and $g(q)$, the accelerated ADMM with restart scheme in \eqref{eqn_faster_pseudo_code} converges in the sense that $\lim_{i\rightarrow\infty} c^i\rightarrow0$.
\end{lemma}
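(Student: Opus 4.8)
The result is the specialization of \cite[Thm.~3]{goldstein2014fast} to our splitting \eqref{eqn_general_QP}, so the plan is to reuse the two residual lemmas above as the structural inputs to Goldstein's restart argument rather than to reprove the convergence machinery from scratch. The two ingredients are already in place: the summability estimate (from \cite[Appendix~A]{boyd2011distributed}) gives $\sum_{i=0}^{\infty} c^{i+1}\le V^0<\infty$ for the plain scaled ADMM recursion, and the monotone-decrease estimate (from \cite[Thm.~4.1]{he2015non}) gives $c^{i+1}\le c^i$ for that same recursion. The only new dynamics introduced by \eqref{eqn_faster_pseudo_code} are (i) the Nesterov extrapolation, which replaces the dual pair $(q^i,s^i)$ used to generate the next iterate by a momentum point $(\hat q^{i},\hat s^{i})$ with coefficient governed by $\alpha_{i+1}=(1+\sqrt{1+4\alpha_i^2})/2$, and (ii) the restart test with parameter $\eta\in(0,1)$.

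First I would record the effect of the restart rule on the monitored sequence $\{c^i\}$. Whenever the sufficient-decrease test $c^{i+1}\le\eta\,c^{i}$ holds, the accelerated step is accepted and we already have a strict contraction of $c$. Whenever it fails, the scheme resets $\alpha\leftarrow 1$ and reverts the extrapolation points $(\hat q,\hat s)$ to the last accepted iterate; the step taken from that point is then an ordinary scaled-ADMM step, to which the monotone-decrease lemma applies and yields $c^{i+1}\le c^{i}$. Combining the two cases shows that $\{c^i\}$ is non-increasing and bounded below by $0$, hence convergent to some $c^\star\ge 0$; it remains to show $c^\star=0$.

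I would finish by a dichotomy on the number of restarts. If only finitely many restarts occur, let $i_0$ be the index of the last one; for every $i>i_0$ the decrease test is accepted, so $c^{i}\le\eta^{\,i-i_0}c^{i_0}\to 0$ and thus $c^\star=0$. If infinitely many restarts occur, then along the restart subsequence the scheme performs genuine plain-ADMM steps from the last accepted points, so the corresponding combined residuals form a subsequence of the plain-ADMM residuals controlled by the summable series $\sum_i c^{i+1}\le V^0$; a convergent series forces its terms to vanish, so $c^{i}\to 0$ along the restart subsequence, and monotonicity of the full sequence upgrades this to $c^\star=0$. Either way $\lim_{i\to\infty}c^i=0$.

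The step I expect to be the main obstacle is the bridge in the infinite-restart case: one must argue carefully that the residuals generated immediately after a restart really inherit the plain-ADMM guarantees, so that both the summability and monotone-decrease lemmas apply to them, even though the intervening accelerated iterates do not satisfy a clean Lyapunov inequality. Making this precise requires identifying the exact energy quantity that Goldstein tracks and verifying that the reset of $(\alpha,\hat q,\hat s)$ restores it to its value at the last accepted iterate; once that identification is in hand, the summability lemma closes the argument. The extrapolation coefficients and the scaled-versus-standard-form discrepancy noted after \cite{he2015non} are only bookkeeping and do not affect the logic.
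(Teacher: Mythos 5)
The paper does not actually prove this lemma; it is imported verbatim from \cite[Thm.~3]{goldstein2014fast} (with the remark ``From \cite[Sec 4.3]{goldstein2014fast} \ldots is convergent''), so your proposal has to be measured against Goldstein et al.'s argument rather than anything in the text. Your skeleton --- monotonicity on restarted steps, contraction by $\eta$ on accepted accelerated steps, then a dichotomy --- is the right one, and the finite-restart branch is fine. The gap is in the infinite-restart branch, and it is exactly the obstacle you flag without resolving. The summability estimate $\sum_i \rho\|s^{i+1}-s^i\|_2^2+\rho\|G(q^{i+1}-q^i)\|_2^2\le V^0$ is a telescoping Lyapunov bound along a \emph{single} trajectory of unaccelerated ADMM: it comes from $V^{i+1}\le V^i-c^{i+1}$ with $V^i$ measuring distance to a fixed optimal point. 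The restart iterations of \eqref{eqn_faster_pseudo_code} are isolated plain-ADMM steps launched from points reached through accelerated iterations, during which $V$ can \emph{increase} (this is precisely why restarts are needed); their residuals are therefore not a subsequence of any single summable series, and ``$c^i\to0$ along the restart subsequence'' does not follow. A smaller bookkeeping error: on a rejected iteration the computed residual $c^i$ of the discarded iterate need not satisfy $c^i\le c^{i-1}$ (the failed test only tells you $c^i>\eta c^{i-1}$), so the full sequence $\{c^i\}$ is not non-increasing without the reassignment $c^i\leftarrow c^{i-1}/\eta$ that Goldstein's algorithm performs and the paper's pseudo-code omits; what is monotone is the residual sequence of the \emph{accepted} iterates.

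The repair is to dichotomize on the number of \emph{accepted accelerated} steps rather than on the number of restarts. If acceleration is accepted infinitely often, then $c^i\le\eta^{\,n_i}c^0$ where $n_i\to\infty$ counts accepted steps, while restarted steps do not increase the accepted-iterate residual by \cite[Thm.~4.1]{he2015non}; hence $c^i\to0$ with no summability argument needed. If acceleration is accepted only finitely often, the tail of the algorithm reduces to a genuine unaccelerated ADMM trajectory (the extrapolation coefficient is reset and each step continues from the previously accepted iterate), and on that tail the summability lemma does apply as a single telescoping argument, forcing $c^i\to0$. This is the structure of Goldstein et al.'s proof, and it sidesteps the need to control the Lyapunov function across accelerated iterates, which is the step your version cannot close.
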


\begin{subequations}\label{eqn_faster_pseudo_code}
{\small
\begin{empheq}[left=\empheqlbrace]{align}
    &\textbf{Pseudo code of Accelerated ADMM with restart:}\nonumber\\
    &\textbf{Given } p^0,\hat{q}^{0}=q^0, ~\hat{\xi}^{0}=\xi^0,\alpha_0=1,\eta=0.999,c^0=1\nonumber\\
        &\textbf{for } i=1,2\cdots,\nonumber\\
        &\quad p^{i}=\argmin_{p}~f(p)+\frac{\rho}{2}\|Fp+G\hat{q}^i-h+\hat{\xi}^i\|_2^2\\
        &\quad q^{i}=\argmin_{q}~g(q)+\frac{\rho}{2}\|Fp^{i+1}+Gq-h+\hat{\xi}^i\|_2^2\\
        &\quad \xi^{i}=\hat{\xi}^i + Fp^{i+1} + Gq^{i+1} - h\\
        &\quad c^{i+1} =\rho\|\xi^{i}-\hat{\xi}^i \|_2^2 + \rho\|G( q^{i} -\hat{q}^i)\|_2^2\\
        &\quad \textbf{if } c^{i+1}\leq\eta c^i\nonumber\\
        &\quad\quad \alpha_{i+1}=\frac{1+\sqrt{1+4\alpha_i^2}}{2}\nonumber\\
        &\quad\quad \hat{q}^{i+1}=q^{i} + \frac{\alpha_i-1}{\alpha_{i+1}}(q^{i}-q^{i-1})\nonumber\\
        &\quad\quad \hat{\xi}^{i+1}=\xi^{i}+ \frac{\alpha_i-1}{\alpha_{i+1}}(\xi^{i}-\xi^{i-1})\nonumber\\
        &\quad \textbf{else}\nonumber\\
        &\quad\quad \alpha^{i+1}=1, \hat{q}^{i+1} = q^{i-1},\hat{\xi}^{i+1} = \xi^{i-1}\nonumber
    \end{empheq}
}
\end{subequations}

Applying accelerated ADMM with restart \eqref{eqn_faster_pseudo_code} to the \textit{parallel-in-horizon} and \textit{construction-free} iterations in \eqref{eqn_ADMM_closed_iter}, together with a stopping criterion, yields the final $\pi$MPC algorithm in Algorithm~\ref{alg_ADMM_MPC}.

\begin{algorithm}[!htbp]
    \caption{$\pi$MPC: parallel-in-horizon and construction-free ADMM algorithm for MPC} \label{alg_ADMM_MPC}
    \textbf{Input}: Given $\Delta u_k^0$, $\Bar{x}_{k+1}^0$, $\hat{v}_{k}^0=v_{k}^0$, $\hat{z}_{k+1}^0=z_{k+1}^0$, $\hat{\theta}_{k}^0=\theta_{k}^0$, $\hat{\beta}_{k}^0=\beta_{k}^0$, $\hat{\lambda}_{k}^0=\lambda_{k}^0$ with $k=0,\cdots,N-1$ (such as from warm-start); the parameter $\rho$, a desired optimal level $\epsilon$, and the maximum number of iterations $K_{\max}$, the current states: $\Bar{x}_0=\mathrm{col}(x(t),0)$; $\alpha_0=1, \eta=0.999,c^0=1$.
    \vspace*{.1cm}\hrule\vspace*{.1cm}
    \textbf{cache} $\left\{J_k\triangleq\left(\Bar{B}_{t,k}^\top\Bar{B}_{t,k}\right)^{-1}\Bar{B}_{t,k}^\top\right\}_{k=0}^{N-1}$ and $\left\{H_k\right\}_{k=0}^{N-1}$;\\
    \textbf{for} $i=0,1, 2,\cdots{},K_{\max}-1$ \textbf{do}
    \begin{enumerate}[label*=\arabic*., ref=\theenumi{}]
        \item \textbf{for} $k=0,1,\cdots, N-1$ \textbf{(in parallel)}
        \[
        \begin{aligned}
            \Delta u_{k}^{i+1}&=J_k\left(\hat{v}_k^i-\hat{\beta}_k^i\right),\\
            \Bar{x}_{k+1}^{i+1}&= H_kh_k    
        \end{aligned}
        \]
        with
        {\footnotesize
        \[
         h_k\triangleq\left\{\begin{array}{l}
            \!\!\bar{q}\!+\!\rho(\hat{z}_{k+1}^i\!-\!\hat{\theta}_k^i),~ \text{if }k=N-1\\
            \!\!\bar{q}\!+\!\rho\Big(\hat{z}_{k+1}^i\!-\!\hat{\theta}_k^i\!+\!\Bar{A}_{t,k}^\top(\hat{z}_{k+2}^i\!-\!\hat{v}_{k+1}^i\!-\!\Bar{e}_{t,k}\!+\!\hat{\lambda}_{k+1}^i)\Big),\text{else} 
            \end{array} \right.
        \]
        }
        \item \textbf{for} $k=0,1,\cdots, N-1$ \textbf{(in parallel)}
        {\small
        \[
        \begin{aligned}
        \gamma_k &= \Bar{B}_{t,k}\Delta u_k^{i+1}+\hat{\beta}_k^i-\Bar{A}_{t,k}\Bar{x}_k^{i+1}-\Bar{e}_{t,k}+\hat{\lambda}_k^i\\
    z_{k+1}^{i+1}&=\mathrm{Proj}_{\mathcal{\Bar{X}}}\left(\!\frac{2(\Bar{x}_{k+1}^{i+1}\!+\!\hat{\theta}_k^i\!+\!\Bar{A}_{t,k}\Bar{x}_{k}^{i+1}\!+\!\Bar{e}_{t,k}\!-\!\hat{\lambda}_k^i)\!+\!\gamma_k}{3}\!\right)\\
    v_{i}^{i+1}&=\frac{1}{2}\Big(z_{k+1}^{i+1}+\gamma_k\Big)
        \end{aligned}
        \]
        }
        \item \textbf{for} $k=0,1,\cdots, N-1$ \textbf{(in parallel)}
        \[
            \begin{aligned}
            \theta_k^{i+1}&=\hat{\theta}_k^i + \Bar{x}_{k+1}^{i+1}-z_{k+1}^{i+1} \\
            \beta_k^{i+1}&=\hat{\beta}_k^{i} + \Bar{B}_{t,k}\Delta u_{k}^{i+1} - v_k^{i+1}\\
            \lambda_k^{i+1}&=\hat{\lambda}_k^i + z_{k+1}^{i+1} -\Bar{A}_{t,k}\Bar{x}_{k}^{i+1} - v_{k}^{i+1}-\Bar{e}_{t,k}
            \end{aligned}
        \]
    \item \textbf{compute}
    \[
    \begin{aligned}
        c^{i+1}\leftarrow\rho\sum_{k=0}^{N-1}&\|\theta_k^{i+1}-\hat{\theta}^i\|_2^2+\|\beta_k^{i+1}-\hat{\beta}^i\|_2^2+\|\lambda_k^{i+1}-\hat{\lambda}^i\|_2^2 \\ +&\|z^{i+1}_{k+1}-\hat{z}^i_k\|_2^2+\|v_k^{i+1}-\hat{v}^i_k\|_2^2\\
        +&\|(z_{k+1}^{i+1}- v_{k}^{i+1}) - (\hat{z}_{k+1}^i -\hat{v}_{k}^i)\|_2^2    
    \end{aligned}
    \]
    \item \textbf{if} $c^{i+1}\leq\epsilon$, \textbf{break};\\
    \item \textbf{if} $c^{i+1}\leq\eta c^i$, 
    {\footnotesize
    \[
    \begin{aligned}
        &\alpha_{i+1}=0.5+0.5\sqrt{1+4\alpha_i^2}\\
        &(\hat{z}_{k+1}^{i+1},\hat{v}_k^{i+1},\hat{\theta}_k^{i+1},\hat{\beta}_k^{i+1},\hat{\lambda}_k^{i+1})= (z_{k+1}^{i+1},v_k^{i+1},\theta_k^{i+1},\beta_k^{i+1},\lambda_k^{i+1})\\
        &+\frac{\alpha_i-1}{\alpha_{i+1}}\Big(z_{k+1}^{i+1},v_k^{i+1},\theta_k^{i+1},\beta_k^{i+1},\lambda_k^{i+1}) - (z_{k+1}^{i},v_k^{i},\theta_k^{i},\beta_k^{i},\lambda_k^{i})\Big)\\
        &\text{with } k=0,\cdots,N-1 \textbf{ in parallel}
    \end{aligned}
    \]
    }
    \item \textbf{else} 
    {\footnotesize
    \[
    \begin{aligned}
    &\alpha_{i+1}=1 \\
    &(\hat{z}_{k+1}^{i+1},\hat{v}_k^{i+1},\hat{\theta}_k^{i+1},\hat{\beta}_k^{i+1},\hat{\lambda}_k^{i+1})= (z_{k+1}^{i+1},v_k^{i+1},\theta_k^{i+1},\beta_k^{i+1},\lambda_k^{i+1})\\
    &\text{with } k=0,\cdots,N-1 \textbf{ in parallel}
    \end{aligned}
    \]
    }
    \end{enumerate}
    \textbf{end}\vspace*{.1cm}\hrule\vspace*{.1cm}
    \textbf{Output:} $(\Delta u_k^{i+1},~\Bar{x}_{k+1}^{i+1})$, $(v_k^{i+1},~z_{k+1}^{i+1})$, $(\theta_k^{i+1},~\beta_k^{i+1},~\lambda_k^{i+1})$ with $k=0,\cdots,N-1$ for warm-start in $(t+1)$-th sampling time. Applying the first control input: $u_0=\Delta u_0$, in MPC.
\end{algorithm}

\section{Numerical Example}
This section benchmarks $\pi$MPC against a QP-ADMM (implemented by the authors) OSQP~\cite{SBGBB20} (a state-of-the-art implementation of QP-ADMM), and structure-exploiting Riccati-ADMM (implemented by the authors). First, an ill-conditioned AFTI-16 MPC example is used to evaluate whether the iteration count of $\pi$MPC increases. Second, randomly generated linear MPC problems demonstrate that the per-iteration cost of $\pi$MPC is nearly independent of problem size on GPU. Finally, a nonlinear CSTR example illustrates that $\pi$MPC is so natural to support fast NMPC applications. 

All simulations are conducted using Julia on a desktop with an AMD Ryzen 9 5900X CPU and an NVIDIA GeForce RTX 3080 GPU. 
Codes are available at \url{https://github.com/SOLARIS-JHU/piMPC}.

\subsection{AFTI-16 Aircraft System}
The open-loop unstable linearized model of the AFTI-16 aircraft is from \cite{557577} as shown below,
{
\footnotesize
\begin{align*}
    \dot{x}= & \begin{bmatrix}
    -0.0151 & -60.5651 & 0 & -32.174 \\
    -0.0001 & -1.3411 & 0.9929 & 0 \\
    0.00018 & 43.2541 & -0.86939 & 0 \\
    0 & 0 & 1 & 0
    \end{bmatrix} x  \\
    & +\begin{bmatrix}
    -2.516 & -13.136 \\
    -0.1689 & -0.2514 \\
    -17.251 & -1.5766 \\
    0 & 0
    \end{bmatrix} u \\
    y = & \begin{bmatrix}
        0 & 1 & 0 & 0 \\
        0 & 0 & 0 & 1
    \end{bmatrix}x
\end{align*}
}

The model is discretized with a sampling period of $0.05$ s using a zero-order hold. 
Input constraints are imposed as $|u_i| \leq 25^\circ$, $i = 1, 2$, while output constraints are $-0.5 \leq y_1 \leq 0.5$ and $-100 \leq y_2 \leq 100$. 
The objective is to track reference $r_2$ with pitch angle $y_2$.
The MPC weighting matrices are set to $W_y = \mathrm{diag}(100, 100)$, $W_u = 0$, and $W_{\Delta u} = \mathrm{diag}(0.1, 0.1)$, with prediction horizon $N = 5$.

Fig.~\ref{fig:AFTI16_traj} shows the closed-loop tracking performance, where $y_2$ tracks the reference signal and all output and input constraints are satisfied. 
Fig.~\ref{fig:AFTI16_res} compares the number of iterations required to reach $10^{-6}$ residual at each MPC step. Averaged over 200 MPC steps, the mean number of iterations per step is  \textbf{1326} for OSQP, \textbf{2030} for $\pi$MPC, \textbf{2551} for Riccati-ADMM, and \textbf{5207} for QP-ADMM. These results indicate that the parallel-in-horizon and construction-free $\pi$MPC achieves iteration-level convergence performance comparable to state-of-the-art ADMM variants.

\begin{figure}[htbp]
\centering
\vspace{-1em}
\includegraphics[width=\linewidth]{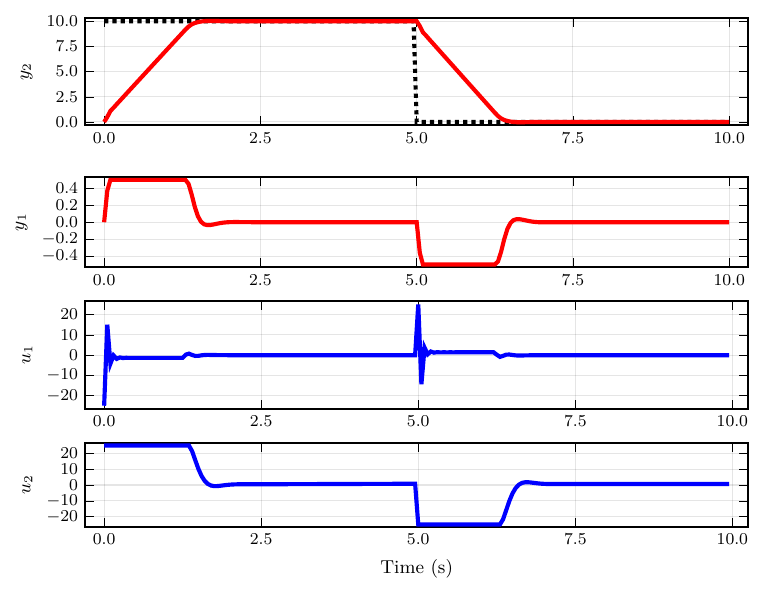}
\vspace{-2em}
\caption{AFTI-16 closed-loop trajectory tracking performance}
\label{fig:AFTI16_traj}
\end{figure}

\begin{figure}[htbp]
\centering
\vspace{-1em}
\includegraphics[width=\linewidth]{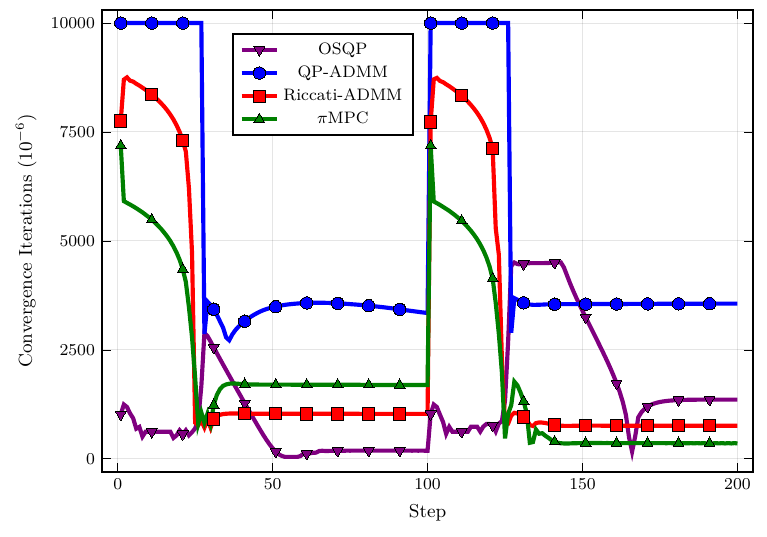}
\vspace{-2em}
\caption{Convergence iterations to $10^{-6}$ residual at each MPC step for AFTI-16 (capped at 10,000)}
\label{fig:AFTI16_res}
\end{figure}

\subsection{Randomly Generated System}
To benchmark scalability, we use randomly generated systems~\cite{Wang2010FastMP} with dynamics $x_{t+1} = Ax_t + Bu_t + w_t$, where $A$ and $B$ have entries drawn from $\mathcal{N}(0, 1)$ and $A$ is scaled to unit spectral radius. State disturbances $w_t$ have entries uniformly distributed on $[-0.3, 0.3]$.
We vary the number of states $n$, inputs $m$, and prediction horizon $N$ to assess performance across different problem scales.
Box constraints are imposed with $|x_i| \leq 5$ for states and $|u_i| \leq 0.1$ for inputs, with state weighting $W_y = I_n$, input weighting $W_u = I_m$, and $W_{\Delta u} = 0$.

Table~\ref{tab:scalability} and Fig.~\ref{fig:scalability} present the scalability comparison across problem scales based on average per-iteration time over 200 iterations. 
OSQP (a state-of-the-art implementation of QP-ADMM) is the fastest on small-scale problems but exhibits increasing time growth with problem scale, while QP-ADMM shows more severe scaling issues.
Riccati-ADMM scales better than QP-ADMM but still shows notable time growth on large-scale problems.
$\pi$MPC on CPU exhibits moderate and consistent scaling, though slower than OSQP on small-scale problems.
$\pi$MPC on GPU maintains nearly constant computation time across all scales, achieving substantial speedup on large-scale problems.

\begin{table}[htbp]
\centering
\vspace{-1em}
\caption{Scalability Benchmark on Randomly Generated Systems}
\label{tab:scalability}
\begin{tabular}{rrrrrrrr}
\toprule
$n$ & $m$ & $N$ & OSQP & \makecell{QP-\\ADMM} & \makecell{Riccati-\\ADMM} & \makecell{$\pi\mathrm{MPC}$\\(CPU)} & \makecell{$\pi\mathrm{MPC}$\\(GPU)} \\
\midrule
 10 &  3 &  20 &     \textbf{0.02} &     0.09 &   0.34 &   0.15 &   0.53 \\
 10 &  3 &  50 &     \textbf{0.06} &     0.50 &   0.99 &   0.31 &   0.55 \\
 10 &  3 & 100 &     \textbf{0.11} &     4.31 &   1.93 &   0.58 &   0.54 \\
 10 &  3 & 200 &     \textbf{0.23} &    59.44 &   3.94 &   1.12 &   0.55 \\
 10 &  3 & 500 &     0.59 &   420.39 &  10.63 &   2.75 &   \textbf{0.56} \\
\midrule
 20 &  6 &  20 &     \textbf{0.07} &     0.40 &   0.67 &   0.17 &   0.56 \\
 20 &  6 &  50 &     \textbf{0.18} &     4.74 &   1.45 &   0.35 &   0.56 \\
 20 &  6 & 100 &     \textbf{0.35} &    59.00 &   2.84 &   0.68 &   0.55 \\
 20 &  6 & 200 &     0.72 &   266.25 &   6.04 &   1.38 &   \textbf{0.55} \\
 20 &  6 & 500 &     1.85 &  1639.62 &  14.63 &   3.52 &   \textbf{0.56} \\
\midrule
 30 & 10 &  20 &     \textbf{0.15} &     0.74 &   1.10 &   0.18 &   0.54 \\
 30 & 10 &  50 &     \textbf{0.37} &    30.76 &   2.66 &   0.38 &   0.54 \\
 30 & 10 & 100 &     0.74 &   158.66 &   5.63 &   0.73 &   \textbf{0.58} \\
 30 & 10 & 200 &     1.56 &   627.56 &  10.59 &   1.61 &   \textbf{0.56} \\
 30 & 10 & 500 &     5.78 &      --- &  26.91 &   3.90 &   \textbf{0.56} \\
\midrule
 50 & 15 &  20 &     \textbf{0.34} &     5.33 &   3.52 &   0.23 &   0.55 \\
 50 & 15 &  50 &     0.94 &    99.96 &   8.95 &   \textbf{0.47} &   0.56 \\
 50 & 15 & 100 &     2.09 &   414.49 &  18.08 &   1.24 &   \textbf{0.58} \\
 50 & 15 & 200 &     7.35 &  1653.79 &  36.67 &   2.08 &   \textbf{0.57} \\
 50 & 15 & 500 &    24.10 &      --- &  86.04 &   4.99 &   0.59 \\
\midrule
100 & 30 &  20 &     1.49 &    67.60 &  14.74 &   \textbf{0.35} &   0.56 \\
100 & 30 &  50 &     6.51 &   416.92 &  39.59 &   0.96 &   \textbf{0.58} \\
100 & 30 & 100 &    17.49 &  1650.60 &  70.37 &   1.72 &   \textbf{0.57} \\
100 & 30 & 200 &    38.46 &      --- & 156.00 &   3.15 &   \textbf{0.60} \\
100 & 30 & 500 &    99.36 &      --- & 425.69 &   8.14 &   \textbf{0.62} \\
\bottomrule
\end{tabular}
\begin{tablenotes}
\small
\item \textit{Note}: ``---'' indicates out-of-memory failure. All times are per-iteration computation times in \textbf{milliseconds}.
\end{tablenotes}
\vspace{-1em}
\end{table}

\begin{figure}[htbp]
\centering
\vspace{-0.8em}
\includegraphics[width=\linewidth]{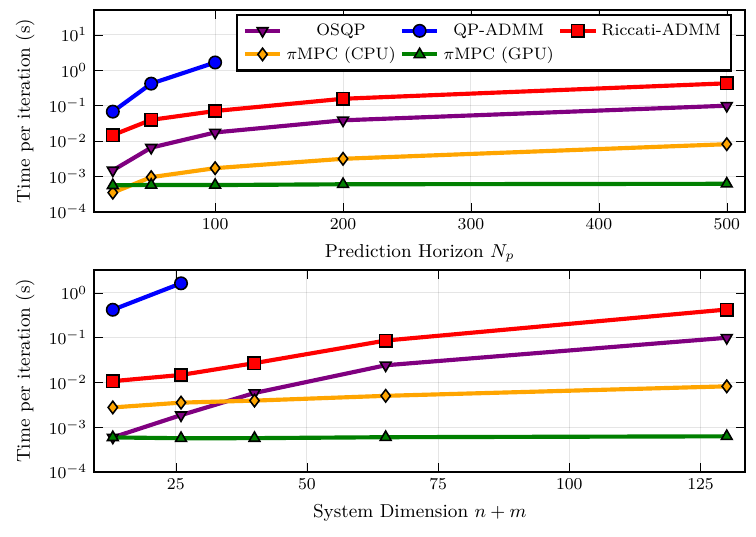}
\vspace{-2em}
\caption{Per-iteration computation time as a function of (a) prediction horizon, where system dimension is $(n, m) = (100, 30)$ and (b) system dimension, where the prediction horizon is $N = 500$}
\label{fig:scalability}
\end{figure}

\subsection{Nonlinear CSTR System}
To assess performance on nonlinear systems, we consider a CSTR, described by
{
\footnotesize
\[
\begin{aligned}
\frac{dC_A}{dt} &= C_{A,i} - C_A - k_0 e^{-EaR/T} C_A, \\
\frac{dT}{dt} &= T_i + 0.3T_c - 1.3T + 11.92k_0 e^{-EaR/T} C_A, \\
y &= C_A,    
\end{aligned}
\]
}

where $C_A$ is the concentration (kgmol/m$^3$), $T$ is the reactor temperature (K), and $T_c$ is the coolant temperature. 
The inlet concentration is $C_{A,i} = 10.0$ kgmol/m$^3$, and the inlet temperature disturbance is $T_i = 298.15 + 5\sin(0.05t)$ K. 
Model constants are $k_0 = 34930800$ and $EaR = 5963.6$ K. 
The objective is to drive the system from a low-conversion state ($C_A = 8.57$ kgmol/m$^3$, $T = 311$ K) to high conversion ($C_A = 2$ kgmol/m$^3$) while rejecting the inlet temperature disturbance.
The nonlinearity is handled via online linearization, where the system is linearized around the current state at each time step using forward Euler discretization with $T_s = 0.5$ min.
The MPC uses a prediction horizon of $N = 5000$ with weighting matrices $W_y = 1$, $W_u = 0$, and $W_{\Delta u} = 0.1$, where the long prediction horizon demonstrates the scalability benefits of GPU-accelerated $\pi$MPC.

Table~\ref{tab:cstr_stats} summarizes the computation time over 400 MPC steps with 1000 iterations each, demonstrating that $\pi$MPC outperforms OSQP. OSQP incurs a per-step QP matrix construction cost comparable to its solver runtime, whereas $\pi$MPC is construction-free. Fig.~\ref{fig:CSTR_traj} shows the closed-loop tracking performance under inlet temperature disturbance, with both methods achieving consistent results.

\begin{table}[htbp]
\centering
\caption{CSTR Computation Time Statistics}
\label{tab:cstr_stats}
\begin{tabular}{lcc}
\toprule
 & OSQP$^*$ & $\pi\mathrm{MPC}^\dagger$ \\
\midrule
Median & 1.891 (1.073) & 0.526 \\
Max & 1.970 (1.120) & 0.610 \\
\bottomrule
\end{tabular}
\begin{tablenotes}
\small
\item \textit{Note}: $^*$Total time (construction time in parentheses); $^\dagger$Solution time only (construction-free). All times are in seconds per MPC step over 400 steps with 1000 iterations each and prediction horizon $N=5000$.
\end{tablenotes}
\end{table}

\begin{figure}[htbp]
\centering
\vspace{-1em}
\includegraphics[width=\linewidth]{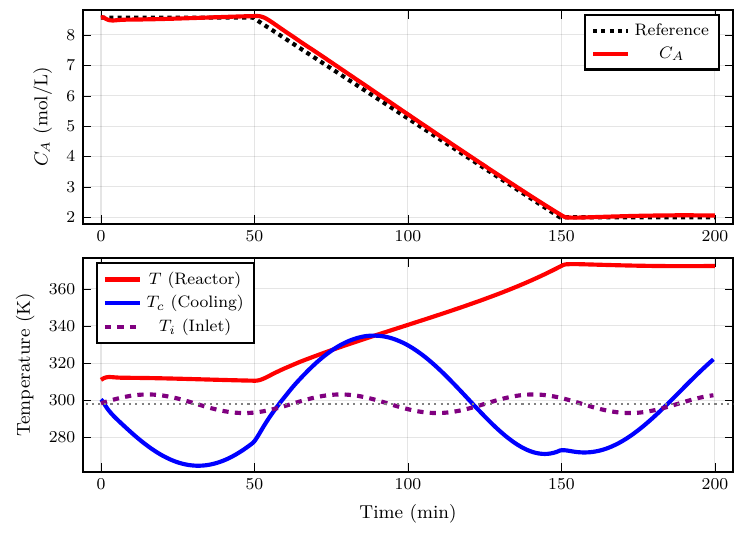}
\vspace{-2em}
\caption{CSTR trajectory tracking with inlet temperature disturbance}
\label{fig:CSTR_traj}
\end{figure}

\section{Conclusion}
This technical note presented $\pi$MPC, a novel \textit{parallel-in-horizon} and \textit{construction-free} ADMM-based algorithm for NMPC problems. By introducing a novel variable-splitting scheme together with a velocity-based formulation, $\pi$MPC enables horizon-wise parallel execution while operating directly on system matrices, thereby avoiding explicit MPC-to-QP construction. $\pi$MPC admits closed-form iteration updates with linear and parallel computational complexity in the prediction horizon, making it particularly suitable for long-horizon and real-time NMPC applications on embedded controller platforms. Future work will focus on developing a differentiable version of $\pi$MPC that can be used as a layer within an end-to-end deep neural network.

\bibliographystyle{IEEEtran}
\bibliography{ref_piMPC} 
\end{document}